\theoremstyle{plain}
\newtheorem{theorem}{Theorem}[section]
\newtheorem{fact}[theorem]{Fact}
\newtheorem{proposition}[theorem]{Proposition}
\newtheorem{claim}[theorem]{Claim}
\newtheorem{corollary}[theorem]{Corollary}
\newtheorem{lemma}[theorem]{Lemma}
\newtheorem{observation}[theorem]{Observation}
\newtheorem{question}[theorem]{Question}
\newtheorem*{theorem*}{Theorem}
\newtheorem*{corollary*}{Corollary}
\theoremstyle{definition}
\newtheorem{definition}[theorem]{Definition}
\newtheorem{case}{Case}
\title{Weakly reflecting graph properties}
\author{Attila Jo\'{o}}
\thanks{Funded by the Deutsche Forschungsgemeinschaft (DFG, German
Research Foundation)-513023562 and partially by NKFIH OTKA-129211}
\address{Attila Jo\'{o},
Department of Mathematics, University of Hamburg, Bundesstra{\ss}e 55 (Geomatikum), 20146 Hamburg, Germany}
\email{attila.joo@uni-hamburg.de}
\address{Attila Jo\'{o},
Logic, Set theory and topology department, Alfr\'{e}d R\'{e}nyi Institute of Mathematics,  13-15 Re\'{a}ltanoda St., 
Budapest, Hungary}
\email{jooattila@renyi.hu}
\keywords{elementary submodel, infinite graph, partition, reflection}
\subjclass[2020]{Primary: 05C63 Secondary: 03E05, 03E35} 
\begin{document}

\begin{abstract}
L. Soukup formulated an abstract framework in his introductory paper for proving theorems about uncountable graphs by 
subdividing them by an 
increasing, continuous chain of elementary submodels. The applicability of this method relies on the preservation of a certain property (that varies 
from problem to problem)  by the subgraphs obtained by subdividing the graph by an elementary submodel.  He calls the properties that are 
preserved ``well-reflecting''. The aim of this paper is to investigate the possibility of weakening the assumption 
``well-reflecting'' in L. 
Soukup's framework. Our motivation is to gain a better understanding of a class of problems in infinite graph theory where a weaker form of 
well-reflection naturally occurs.
\end{abstract}
\maketitle
\section{Introduction}
The elementary submodel method is an efficient tool to approach problems in logic, topology and infinitary combinatorics. One way of the
applications is subdividing uncountable structures into smaller well-behaved substructures.  An 
introductory paper about the combinatorial applications of elementary submodels  is written by L. Soukup \cite{soukup2011elementary} in which 
he formulates the 
following 
abstract framework.  A graph\footnote{A graph in this paper is simply a set of unordered pairs.} property (i.e. class of graphs) is well-reflecting if 
whenever  $ M $ is a $ \Sigma 
$-elementary submodel of 
the universe with $ \left|M\right|\subseteq M $ 
for a large enough  finite set $ \Sigma $ of formulas and $ G\in  \Phi \cap M $, then  the subgraphs $ G \cap M
$ and
$ G\setminus M $  are also in $ \Phi $. Let us call these containments the in-reflection and out-reflection of $ \Phi $ 
respectively. Assume that $ 
\Psi\subseteq  \Phi $ are well-reflecting graph classes. Suppose that the countable graphs in $ \Phi $ are also in $ \Psi $, furthermore, if a graph $ G 
$ 
can be partitioned into subgraphs each of which is in $ \Psi $, then $ G \in \Psi $. He shows that then necessarily $ \Phi=\Psi 
$  
\cite[Theorem 
5.6]{soukup2011elementary}. 

In the example given by Soukup, $ \Phi $ 
consists of the graphs not having odd cuts and $ \Psi $ is the class of graphs that can be partitioned into cycles.  He proves directly the 
(highly non-trivial) fact that this $ \Phi $ is indeed well-reflecting  \cite[Lemmas 3.1, 5.2]{soukup2011elementary}. This leads to a new proof 
of the classical theorem of Nash-Williams \cite[p. 235 Theorem 3]{nash1960decomposition} stating that graphs without odd 
cuts are exactly those admitting a 
partition into cycles. 

The main difficulty with the 
application of the 
variants of this framework is showing that the $ \Phi $ corresponding to the problem is well-reflecting (while $ \Psi $ usually 
trivially is). One can typically take  ``one step'' towards the justification of 
property $ \Psi $ without losing property $ \Phi $. For example, if there are no odd cuts, then for every given edge there is a cycle through it and 
after the deletion of the edges of a single cycle, no odd cut will occur. In other 
problems 
the analogous step is much harder. In the 
proof of the infinite version of Menger's theorem\footnote{The original formulation of the framework that we described
accommodates the edge variant of the infinite version of Menger's theorem. } by Aharoni and Berger, they 
link 
one given vertex $ a\in A $ to $ B $ 
by a path $ P $ 
\cite[Theorem 6.1]{aharoni2009menger} in graph $ G $ in such a way that $ G-P $ maintains a rather complicated property $ \Phi $. Here 
$ 
\Psi $ is the 
property that the 
whole $ A $ can be linked to $ B $ by disjoint paths.  A 
similar result has been obtained by the author
concerning the infinite Lovász-Cherkassky problem \cite[Claim 4.6]{joo2023lovcher}. A suitable iteration of such single 
steps  leads  
to the 
solution of 
the corresponding problem in the countable case. It also leads to the solution of the approximation of an uncountable problem by a countable  
elementary submodel, i.e.   $ G 
\cap M\in \Psi $ can be shown if $ G\in \Phi $. This means that the in-reflection of $ \Phi $ restricted to countable 
elementary submodels
can be verified.  

Another 
phenomenon that occurs frequently that one can guarantee that   $ \Phi $ out-reflects most of the time but not 
that always. 
Typically, one considers a carefully chosen 
increasing 
continuous $ \kappa $-chain of subgraphs for a regular uncountable cardinal $ \kappa $.  Then it can be guaranteed that there is a closed 
unbounded set where $ \Phi $ is preserved after the removal of the corresponding initial segment.  See for example 
the so-called 
obstructive $ \kappa $-towers of Aharoni, Nash-Williams and Shelah  \cite{aharoni1983general, aharoni1984konig} or the $ \kappa $-hindrances 
in the already mentioned infinite version of Menger's 
theorem \cite[Section 8]{aharoni2009menger}. 

This paper aims to investigate if Soukup's framework can be improved to handle these arising difficulties. Our motivation is to gain a better 
understanding on an abstract level about some solved problems and obtain a new tool to approach some open ones (for example  \cite[Conjecture 
3.3]{aharoni1998intersection} and \cite[Conjecture 5.1]{joo2023lovcher}). The hindrances explained above led us to introduce 
``weak reflection'' (Definition 
\ref{def: weakly 
reflecting}). We investigate if 
the well-reflection of $ \Phi $ can be replaced by weak reflection in Soukup's framework. We provide the positive answer 
under a rather weak set-theoretic assumption as well as a ZFC-proof for 
graphs of size at most $ \aleph_2 $ (Theorem \ref{thm: main}).
The question if our theorem is provable in ZFC for graphs of arbitrary size remains open.

The paper is 
organized as follows. Our (mostly 
standard) notation and some basic facts are discussed in Section \ref{Sec: notation and basic facts}. The 
main result and its proof are given in Section \ref{Sec: the main result}.

\begin{center}
\textbf{Acknowledgement}
\end{center} 
We are thankful for C. Lambie-Hanson and A. Rinot for bringing the concept of ``approachability'' to our attention. This allowed 
us to replace the weak square principle in our original proof by an even weaker one. 
\\ 

\section{Notation and basic facts}\label{Sec: notation and basic facts}
\subsection{Set theory}The variables $ \alpha,\beta, \gamma $ and $ \delta $ are standing for ordinal numbers, while $ \kappa $ and $ \lambda $ 
denote 
cardinals. The smallest 
limit ordinal, i.e. the set of the natural numbers is denoted by $ \omega $. We write $ 
\mathsf{On} $ for the class of the ordinals, $ \mathsf{acc}(\kappa) $ stands for the accumulation points of $ \kappa 
$.  The cofinality of $ \alpha $ is denoted by $ \mathsf{cf}(\alpha) $ and for cardinals $ \lambda<\kappa $ we let $ 
S^{\kappa}_\lambda:=\{ \alpha<\kappa:\ \mathsf{cf}(\alpha)=\lambda \} $. 
The order type of a set $ O $ of ordinals is $ \mathsf{ot}(O) $. Let $ \kappa $ be 
 an infinite cardinal. A sequence $ \left\langle M_\alpha:\ \alpha<\kappa  \right\rangle $ of sets is increasing ($ \in $-increasing) if $ M_\beta 
 \subseteq 
 M_\alpha $ ($ M_\beta \in
  M_\alpha $) for every $ \beta<\alpha<\kappa $. An increasing sequence is continuous if $ M_\alpha=\bigcup_{\beta<\alpha}M_\beta $ for each $ 
  \alpha\in \mathsf{acc}(\kappa) $. Suppose that $ \kappa=\mathsf{cf}(\kappa)>\aleph_0 $. A set $ C\subseteq \kappa $ is a club of $ \kappa $ if it 
  is unbounded in $ 
 \kappa $ and closed with respect to 
 the order 
 topology (i.e. $ \sup B:=\bigcup B \in C $ for every $ B\subseteq C $ bounded in $ \kappa $). The club filter $ \mathsf{club}(\kappa) $ consists of 
 those subsets 
 of $ \kappa $ that contain a club. The intersection of less than $ \kappa $ many clubs is a club, i.e. this is a $ \kappa $-complete filter. The 
 diagonal intersection $ \triangle_{\alpha<\kappa} C_\alpha $ is defined as $ 
  \bigcap_{\alpha<\kappa} (C_\alpha \cup[0,\alpha]) $ and is a club of $ \kappa $ provided all the $ C_\alpha $ are clubs. A set $ 
 S\subseteq \kappa $ is $ \kappa $-stationary if $ \kappa \setminus S \notin \mathsf{club}(\kappa) $. The set of the 
 stationary  subsets of $ \kappa  $ is denoted by $ \mathsf{stat}(\kappa) $. The concept of approachability was introduced by Shelah 
 implicitly in \cite{shelah1979onsuccessors}.
 \begin{definition}[Approachability ideal, {\cite[first paragraph]{krueger2019theapproach}}]\label{def: approach ideal}
  Let  $ \kappa $ be an uncountable cardinal.  A sequence $ \left\langle a_\alpha:\ \alpha<\kappa^{+}  \right\rangle $ 
   of subsets of $ \kappa^{+} $ with size less than $ \kappa $ is an approaching sequence for $ A\subseteq \kappa^{+} $ if for all  $ \alpha \in A 
   $ there is a $ c_\alpha\subseteq \alpha $ cofinal in $ \alpha $ with $ \mathsf{ot}(c_\alpha)= \mathsf{cf}(\alpha)$ such that $ 
   \gamma \cap c_\alpha \in \{ a_\beta:\ \beta<\alpha \} $ for each $ \gamma<\alpha $. A set $ A\subseteq \kappa^{+} $ is approachable if 
   there 
   exists an 
   approaching sequence for it and $ I[\kappa^{+}] $ is the ideal that consists of those $ A'\subseteq \kappa^{+} $ for which there is an 
   approachable $ A\subseteq A' $ such that $ A'\setminus A $ is non-stationary in $ \kappa^{+} $.
   \end{definition} 
\subsection{Elementary submodels of the universe}Let $ \varphi $ be a first-order formula in the language of set theory with free variables $ 
v_1,\dots, v_n $. For a set $ M $, the formula $ 
\varphi^{M} $ is obtained from $ \varphi $ by  the relativization of the quantifiers to $ M $, i.e. $ \forall v(\dots) $ is replaced by 
$ {\forall v( v\in M \Longrightarrow(\dots))}$ and 
$\exists v(\dots)   $ by $ \exists v( v\in M \wedge (\dots)) $.  The set $ M $ is a
 $ \varphi 
$-elementary submodel of the universe\footnote{It is more common to talk about elementary 
submodels of structures of the form $ H_{\Theta} $ where  $ \Theta $ is a large enough regular cardinal but for our
application this turned out to be less convenient.} if for every $ x_1,\dots, x_n\in M $ we have $  
\varphi(x_1,\dots, 
x_n)\Longleftrightarrow 
\varphi^{M}(x_1,\dots, x_n) $. 
Let $ \Sigma $ be a finite set of formulas. Then we say that $ M $ is a $ \Sigma $-elementary submodel (of the universe) if $ M $ is $ \varphi 
$-elementary for each $ 
\varphi \in \Sigma $. For more details, we refer to \cite{soukup2011elementary}.

\begin{fact}[implicit in {\cite[Corollary 2.6]{soukup2011elementary}}]\label{fact: existence of submodels}
For every set $ x $, infinite cardinal $ \kappa $ and finite set $ \Sigma $ of formulas there exists a $ \Sigma $-elementary  submodel $ M 
$ with $ \left|M\right|=\kappa\subseteq M $ that contains $ x $.
\end{fact}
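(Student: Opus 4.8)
The plan is to reduce this assertion about the proper-class universe to an assertion about a set-sized structure by reflection, and then run the downward Löwenheim--Skolem construction on that structure. The reason a reduction is needed is that one cannot apply model-theoretic Löwenheim--Skolem directly to $V$, nor define a global truth predicate for $V$ (by Tarski's theorem); the finiteness of $\Sigma$ is exactly what lets us sidestep this.

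First I would close $\Sigma$ under subformulas. This keeps $\Sigma$ finite and is harmless, since enlarging $\Sigma$ only strengthens the conclusion, and subformula-closure is what makes both the Skolem-function construction and the induction on formula complexity below go through. By the Lévy--Montague reflection theorem applied to the finite set $\Sigma$, there is a closed unbounded class of ordinals $\theta$ such that each $\varphi\in\Sigma$ is absolute between the level $V_\theta$ of the cumulative hierarchy and the universe, i.e. $\varphi(\bar{y})\Longleftrightarrow\varphi^{V_\theta}(\bar{y})$ for all parameters $\bar{y}\in V_\theta$. I would fix such a $\theta$ large enough that $x\in V_\theta$ and $\kappa<\theta$; then the entire seed $\kappa\cup\{x\}$ lies inside the \emph{set} $V_\theta$ (each ordinal below $\kappa$ lies in $V_\kappa\subseteq V_\theta$).

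Next, working inside the set structure $(V_\theta,\in)$, I would fix by choice a well-ordering of $V_\theta$ and use it to define genuine Skolem functions: for each existential subformula $\exists v\,\psi(v,\bar{w})$ occurring in $\Sigma$, let $F_\psi(\bar{a})$ be the $<$-least $b\in V_\theta$ with $V_\theta\models\psi(b,\bar{a})$ when one exists, and a default value otherwise. Starting from the seed $S:=\kappa\cup\{x\}$ and closing under the finitely many functions $F_\psi$ through $\omega$ stages, I obtain $M:=\bigcup_{n}S_n\subseteq V_\theta$. Each stage adjoins at most $|S_n|$ elements and $|S|=\kappa$, so a routine cardinal computation gives $|M|=\kappa$; moreover $\kappa\subseteq S\subseteq M$ (whence $|M|\ge\kappa$ as well) and $x\in M$. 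Closure of $M$ under the $F_\psi$ is precisely the Tarski--Vaught condition, so $M$ is $\Sigma$-elementary in $V_\theta$, that is $\varphi^M(\bar{a})\Longleftrightarrow\varphi^{V_\theta}(\bar{a})$ for all $\varphi\in\Sigma$ and $\bar{a}\in M$.

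Finally I would compose the two equivalences: for $\varphi\in\Sigma$ and $\bar{a}\in M\subseteq V_\theta$,
\[
\varphi^M(\bar{a})\iff\varphi^{V_\theta}(\bar{a})\iff\varphi(\bar{a}),
\]
the first step by $\Sigma$-elementarity of $M$ in $V_\theta$ and the second by reflection. Hence $M$ is a $\Sigma$-elementary submodel of the universe with $x\in M$, $\kappa\subseteq M$ and $|M|=\kappa$, as required. The only genuinely delicate point is the first reduction, namely securing an honest set-sized structure $V_\theta$ that agrees with $V$ on all of $\Sigma$; once reflection supplies it, the Skolem-hull argument and the bookkeeping of the seed $\kappa\cup\{x\}$ are entirely standard.
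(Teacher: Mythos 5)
Your proof is correct and is essentially the argument the paper's citation points to: the paper gives no proof of this Fact itself (it is quoted as implicit in Soukup's Corollary 2.6), and the standard argument behind that citation is exactly your two-step reduction, L\'evy--Montague reflection to a suitable $V_\theta$ followed by a Skolem-hull (L\"owenheim--Skolem) closure of the seed $\kappa\cup\{x\}$ inside $V_\theta$. Your cardinality bookkeeping and the final composition of the two equivalences are sound, so there is nothing to add.
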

\begin{fact}[implicit in {\cite[Corollary 2.6]{soukup2011elementary}}]\label{fact: union of chain of submodels}
The union of a $ \subseteq $-chain of $ \Sigma $-elementary submodels is a $ \Sigma $-elementary submodel.
\end{fact}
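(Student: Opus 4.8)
The plan is to run the standard Tarski elementary chain argument, adapted to elementary submodels of the universe. Write $M:=\bigcup_i M_i$ for the given $\subseteq$-chain $\{M_i\}$ of $\Sigma$-elementary submodels. Since the chain is totally ordered by inclusion, any \emph{finite} tuple $\bar a=(a_1,\dots,a_n)$ of elements of $M$ already lies in a single member $M_i$ of the chain, and this is the only place where the chain hypothesis is used. I would also arrange at the outset that $\Sigma$ is closed under taking subformulas; the induction below genuinely needs this, and it is harmless since the subformula closure of a finite set of formulas is again finite, matching the ``large enough $\Sigma$'' convention under which the statement is read. The goal is then to show: for every $\varphi\in\Sigma$ and every $\bar a\in M$ one has $\varphi^{M}(\bar a)\Longleftrightarrow\varphi(\bar a)$.

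First I would prove this by induction on the complexity of $\varphi\in\Sigma$. If $\varphi$ is atomic it is quantifier-free, so $\varphi^{M}$ and $\varphi$ are literally the same formula and there is nothing to do. The Boolean cases are immediate once one notes that relativization commutes with the connectives, e.g. $(\neg\psi)^{M}=\neg(\psi^{M})$ and $(\psi_1\wedge\psi_2)^{M}=\psi_1^{M}\wedge\psi_2^{M}$, and that the relevant subformulas lie in $\Sigma$ by the closure assumption, so the induction hypothesis applies. The only real content is the existential case $\varphi=\exists y\,\psi(y,\bar x)$, where $\varphi^{M}(\bar a)=(\exists y\in M)\,\psi^{M}(y,\bar a)$. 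For the forward implication, a witness $b\in M$ with $\psi^{M}(b,\bar a)$ yields $\psi(b,\bar a)$ by the induction hypothesis, hence $\varphi(\bar a)$. For the backward implication, assume $\varphi(\bar a)$ and pick, as above, a chain member $M_i$ with $\bar a\in M_i$; since $M_i$ is $\Sigma$-elementary and $\varphi\in\Sigma$, we get $\varphi^{M_i}(\bar a)$, so there is a witness $b\in M_i$ with $\psi^{M_i}(b,\bar a)$. Because $\psi\in\Sigma$, the $\Sigma$-elementarity of $M_i$ gives $\psi(b,\bar a)$, and then the induction hypothesis applied to $\psi$ and the tuple $(b,\bar a)\in M$ upgrades this to $\psi^{M}(b,\bar a)$. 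As $b\in M$, this witnesses $\varphi^{M}(\bar a)$, completing the induction.

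The step I expect to require the most care is precisely this existential case, together with the bookkeeping around the finite set $\Sigma$: the induction only closes if every subformula occurring in a member of $\Sigma$ is itself controlled, which is why the subformula-closure reduction is essential rather than cosmetic. Everything else reduces to the absoluteness of atomic formulas and the commutation of relativization with the logical connectives. I would finally note that the same argument in fact yields the stronger statement that for each $i$ and each $\bar a\in M_i$ one has $\varphi^{M_i}(\bar a)\Longleftrightarrow\varphi^{M}(\bar a)$, i.e. each chain member remains correctly reflected inside the union $M$; the stated fact is the special case obtained by further invoking the $\Sigma$-elementarity of $M_i$ in the universe.
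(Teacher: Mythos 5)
The paper states this Fact without proof, citing it as ``implicit in'' Soukup's Corollary 2.6; your argument is precisely the standard Tarski--Vaught elementary-chain induction that underlies that citation, and it is correct. Your explicit reduction to a subformula-closed $\Sigma$ is also the right reading of the paper's ``large enough finite $\Sigma$'' convention --- the induction genuinely needs it, exactly as you say --- so the proof goes through as written (with the universal quantifier either treated as $\neg\exists\neg$ or handled by the mirror image of your existential case).
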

\begin{fact}[{\cite[Claim 3.7]{soukup2011elementary}}]\label{fact: submodels subset}
There is a finite set $ \Sigma $ of formulas such that if $ M $ is a $ \Sigma $-elementary submodel of size $ \kappa $ with $\kappa\subseteq M $ 
and $ X\in M 
$ with $\left|X\right|\leq \kappa $, then 
$ X\subseteq M $.
\end{fact}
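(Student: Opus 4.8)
The plan is to reduce the containment $X\subseteq M$ to the transfer of a single cardinality statement, exploiting that $\kappa\subseteq M$ already supplies every small ordinal. The trivial case $X=\emptyset$ needs no argument, so I would assume $X\neq\emptyset$. The idea is then to locate inside $M$ a surjection onto $X$ whose domain is an ordinal no larger than $\kappa$, and to recover each element of $X$ as a value of that surjection at an ordinal below $\kappa$, each of which already lies in $M$.

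Concretely, I would let $\Sigma$ contain, besides the bookkeeping formulas for ``is an ordinal'' and for function application $y=f(\alpha)$, a formula $\mathrm{Card}(v)$ expressing that $v$ is a cardinal, a formula $\mathrm{Bij}(f,a,b)$ expressing that $f$ is a bijection of $a$ onto $b$, and the formula $\psi(X):=\exists\mu\,\exists f\,[\mathrm{Card}(\mu)\wedge\mathrm{Bij}(f,\mu,X)]$. In the universe $\psi(X)$ holds, witnessed by $\mu=\left|X\right|$ and any bijection $f\colon\left|X\right|\to X$. Since $X\in M$ and $M$ is $\psi$-elementary (as $\psi\in\Sigma$), we get $M\models\psi(X)$, so there are $\mu,f\in M$ with $M\models[\mathrm{Card}(\mu)\wedge\mathrm{Bij}(f,\mu,X)]$.

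The key step is to upgrade these internal assertions to genuine ones. As $\mathrm{Card}$ and $\mathrm{Bij}$ belong to $\Sigma$ and their parameters $\mu,f,X$ belong to $M$, elementarity gives that $\mu$ is really a cardinal and $f$ is really a bijection $\mu\to X$. Hence $\left|X\right|=\mu$ as computed in the universe, and the hypothesis $\left|X\right|\leq\kappa$ forces $\mu\leq\kappa$. Consequently $\mu\subseteq\kappa\subseteq M$, so every $\alpha<\mu$ lies in $M$; since $f,\alpha\in M$, applying elementarity to $y=f(\alpha)$ yields $f(\alpha)\in M$. Because $f$ is onto $X$, every element of $X$ equals some such $f(\alpha)$, whence $X\subseteq M$.

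The hard part, I expect, is insisting on one finite $\Sigma$ that works uniformly for every $\kappa$, including cardinals not definable without parameters. The naive route would be to show $\kappa\in M$ and then transfer a bijection $\kappa\to X$, but pinning down $\kappa$ from inside $M$ is exactly where one risks $M$ miscomputing cardinalities when $\kappa\notin M$. The device that avoids this is to transfer the cardinalhood of $\mu=\left|X\right|$ rather than of $\kappa$: once ``$\mu$ is a cardinal'' is genuinely true, the genuine bijection $f$ identifies $\mu$ with $\left|X\right|\leq\kappa$, and the assumption $\kappa\subseteq M$ finishes the argument without ever requiring $\kappa$ itself to be an element of $M$.
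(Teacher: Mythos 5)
Your proof is correct and is essentially the standard argument behind this fact, which the paper itself does not prove but cites from Soukup (Claim 3.7): transfer the statement ``there exist a cardinal $\mu$ and a bijection $f\colon\mu\to X$'' into $M$, use elementarity of the separate formulas $\mathrm{Card}$ and $\mathrm{Bij}$ to see the witnesses are genuine, conclude $\mu=\left|X\right|\leq\kappa$ so $\mu\subseteq\kappa\subseteq M$, and recover $X$ as $\{f(\alpha):\alpha<\mu\}\subseteq M$ via transfer of function application. Your closing observation is also the right one: insisting that $\mu$ be a cardinal (rather than trying to locate $\kappa$ inside $M$) is exactly what makes the argument work when $\kappa\notin M$.
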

\begin{fact}\label{fact: reg contains size}
If  $ \kappa $ is an infinite cardinal and $ \left\langle M_\alpha:\ \alpha<\kappa  \right\rangle $ is an increasing, continuous 
sequence  with $ \left|M_{\alpha+1}\right|\subseteq M_{\alpha+1} $ and $ M_\alpha \in M_{\alpha+1} $  for 
every $ \alpha<\kappa $, then   $ 
\alpha\cup \left|M_{\alpha}\right|\subseteq M_{\alpha} $ for  $0< \alpha<\kappa $.
\end{fact}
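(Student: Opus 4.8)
The plan is to prove the statement by transfinite induction on $\alpha$, establishing the single assertion $P(\alpha)\colon\ \alpha\cup|M_\alpha|\subseteq M_\alpha$ for all $0<\alpha<\kappa$ at once. Throughout I read the members of the chain as $\Sigma$-elementary submodels of the universe for a large enough finite $\Sigma$ (the setting of Fact~\ref{fact: submodels subset}), and this elementarity is exactly the ingredient that makes the statement go through: the purely structural hypotheses do not by themselves yield $\alpha\in M_{\alpha+1}$, so some appeal to elementarity at successor stages is unavoidable. The induction splits into a successor step and a limit step, and essentially all the content sits in the successor step.

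For the successor step, assume $P(\beta)$ and let $\alpha=\beta+1$; the base case $\alpha=1$ is the instance $\beta=0$, where $\beta\subseteq M_\beta$ holds trivially. The containment $|M_{\beta+1}|\subseteq M_{\beta+1}$ is given by hypothesis, and $\beta\subseteq M_\beta\subseteq M_{\beta+1}$ by $P(\beta)$ (trivially when $\beta=0$) together with monotonicity, so it remains to show the one point $\beta\in M_{\beta+1}$. Let $\rho:=\min(\mathsf{On}\setminus M_\beta)$ be the least ordinal missing from $M_\beta$. Since $\rho$ is definable from the parameter $M_\beta$ by the formula expressing ``$x$ is an ordinal, $x\notin M_\beta$, and every ordinal below $x$ lies in $M_\beta$'', and since $M_\beta\in M_{\beta+1}$, elementarity of $M_{\beta+1}$ yields $\rho\in M_{\beta+1}$. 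Now $\beta\subseteq M_\beta$ forces $\rho\geq\beta$: if $\rho=\beta$ then $\beta=\rho\in M_{\beta+1}$, whereas if $\rho>\beta$ then $\beta\in M_\beta\subseteq M_{\beta+1}$. Either way $\beta\in M_{\beta+1}$, which completes $P(\beta+1)$.

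For a limit $\alpha$, continuity gives $M_\alpha=\bigcup_{\beta<\alpha}M_\beta$. The containment $\alpha\subseteq M_\alpha$ is immediate, since any $\xi<\alpha$ satisfies $\xi\in\xi+1\subseteq M_{\xi+1}\subseteq M_\alpha$ by $P(\xi+1)$. For $|M_\alpha|\subseteq M_\alpha$ I would first record $|M_\alpha|=\max\!\big(|\alpha|,\ \sup_{\beta<\alpha}|M_\beta|\big)$, using $\alpha\subseteq M_\alpha$ for the lower bound and the estimate $|\bigcup_{\beta<\alpha}M_\beta|\leq|\alpha|\cdot\sup_{\beta<\alpha}|M_\beta|$ for the upper one. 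Hence any $\xi<|M_\alpha|$ is either $<|\alpha|\leq\alpha$, so $\xi\in\alpha\subseteq M_\alpha$, or satisfies $\xi<|M_\beta|$ for some $0<\beta<\alpha$, so $\xi\in|M_\beta|\subseteq M_\beta\subseteq M_\alpha$ by $P(\beta)$. This gives $P(\alpha)$ and closes the induction.

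The hard part is the successor step, and more precisely the lone point $\beta\in M_{\beta+1}$. The hypotheses $M_\beta\in M_{\beta+1}$ and $|M_{\beta+1}|\subseteq M_{\beta+1}$ only guarantee that $M_{\beta+1}$ contains the ordinal initial segment $|M_{\beta+1}|$, which need not reach $\beta$ --- for example when $\beta$ is a cardinal equal to $|M_{\beta+1}|$. A cardinality count therefore does not suffice, and it is precisely elementarity, through the definability of $\min(\mathsf{On}\setminus M_\beta)$ from the parameter $M_\beta$, that supplies the missing ordinal. The limit step, by contrast, is routine once the successor step and the union cardinality computation are in hand.
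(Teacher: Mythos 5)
Your proof is correct, and its engine is the same as the paper's: the paper also sets $o_\alpha:=\min(\mathsf{On}\setminus M_\alpha)$, asserts that $\left\langle o_\alpha:\ \alpha<\kappa\right\rangle$ is strictly increasing and continuous, deduces $\alpha\subseteq o_\alpha\subseteq M_\alpha$ by transfinite induction, and then handles $\left|M_\alpha\right|\subseteq M_\alpha$ at limits by the same kind of cardinality computation you carry out. Your successor step --- that $\rho=\min(\mathsf{On}\setminus M_\beta)$ lies in $M_{\beta+1}$ because it is definable from the parameter $M_\beta\in M_{\beta+1}$ --- is exactly the missing justification of the paper's ``strictly increasing'' claim.

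Where you genuinely diverge is on the role of elementarity, and your instinct is right: the paper states the Fact, and writes its proof, for an arbitrary increasing continuous chain of sets, and the strict monotonicity of $o_\alpha$ is asserted without justification; it does not follow from the structural hypotheses alone. In fact the statement itself fails without elementarity. Take $M_0:=\{\omega+1\}$, $M_1:=\omega\cup\{\omega+1, M_0\}$, $M_{\alpha+1}:=M_\alpha\cup\{M_\alpha\}$ for $\alpha\geq 1$, and unions at limits: this chain is increasing and continuous with $M_\alpha\in M_{\alpha+1}$ and $\left|M_{\alpha+1}\right|=\aleph_0\subseteq M_{\alpha+1}$, yet an easy induction shows $\omega\notin M_\alpha$ for every $\alpha$ (no $M_\alpha$ can equal $\omega$, since each contains $\omega+1$), so $\omega+1\not\subseteq M_{\omega+1}$; correspondingly $o_1=o_2=\omega$, so the paper's sequence is not strictly increasing. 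This costs the paper nothing, because the Fact is only ever applied to chains of $\Sigma$-elementary submodels with $M_\alpha\in M_{\alpha+1}$ --- precisely the setting you work in --- but your write-up, which makes the elementarity appeal explicit, is the complete version of the intended argument. One minor point: your parenthetical ``when $\beta$ is a cardinal equal to $\left|M_{\beta+1}\right|$'' only explains why a cardinality count is insufficient; an actual witness that the unrelativized statement fails is a chain like the one above.
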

\begin{proof}
Let $ o_\alpha:=\min \mathsf{On}\setminus M_\alpha $. Then $ o_\alpha\subseteq M_\alpha $ by definition and $ \left\langle o_\alpha:\ 
\alpha<\kappa  \right\rangle $ is strictly increasing and continuous, therefore $ \alpha \subseteq o_\alpha $ follows by transfinite induction and 
hence $ \alpha 
\subseteq o_\alpha \subseteq M_\alpha $ for each $ \alpha<\kappa $. 

If $ \alpha\in \mathsf{acc}(\kappa)$, then $ \left|M_\alpha\right|=\sup\{\left|M_{\beta+1}\right|:\ \beta<\alpha \} $ and  $ 
\left|M_{\beta+1}\right|\subseteq M_{\beta+1} \subseteq M_\alpha $ for each $ \beta<\alpha $, thus $ \left|M_\alpha\right|\subseteq M_\alpha $ 
follows. 
\end{proof}

A graph $ G $ in this paper is a set of unordered pairs. 
\section{The main result}\label{Sec: the main result}
\subsection{Preparations}
\begin{definition}[{well-reflecting, \cite[Subsection 5.1]{soukup2011elementary}}]\label{def: well-reflecting}
  A graph class $ \Phi $ is \emph{well-reflecting} if there is a finite set $ \Sigma $ of formulas such that whenever  $ M $ is a $ \Sigma 
  $-elementary 
  submodel of 
  the universe with $ \left|M\right|\subseteq M $, then  
  for every $ G\in  \Phi \cap M $ we have  $ G \cap M, G\setminus M \in \Phi $.
\end{definition}
\begin{definition}[weakly reflecting]\label{def: weakly reflecting}
A graph class $ \Phi $ is  \emph{weakly reflecting} if there is a finite set $ \Sigma $ of formulas such that
\begin{itemize}
\item whenever  $ M $ is a countable $ \Sigma $-elementary submodel of the universe and $ G\in  \Phi \cap M $,  then  $ G 
\cap M\in \Phi  $; 
 \item if  $ \kappa $ is an uncountable regular cardinal and $ \left\langle M_\alpha: \alpha<\kappa  
 \right\rangle 
 $ is an 
 increasing, continuous and $ \in $-increasing sequence of  $ \Sigma $-elementary submodels of the universe such that
  $ \kappa>\left|M_\alpha\right|\subseteq M_\alpha $  for every $ \alpha<\kappa $, then for each $ G \in  
  \Phi \cap 
  M_0 $ there is a club $ 
  C_G $ of $ 
  \kappa $ such that  $ G \setminus M_\alpha\in \Phi $  for every $ \alpha \in C_G $.
\end{itemize} 
\end{definition}

\begin{theorem}\label{thm: main}
Suppose that  $  \Psi\subseteq \Phi $ are classes of graphs such that: 
\begin{enumerate}[label=(\Roman*)]
\item\label{item: main countable}The countable graphs in $ \Phi $ are also in $ \Psi $;
\item\label{item: main merging}The class $ \Psi $ is closed under taking the union of arbitrary many pairwise disjoint elements.
\item\label{item: main club subtractable}The property $ \Phi $ is  weakly reflecting and $ \Psi $ is well-reflecting.
\end{enumerate}
Then every $ G \in \Phi $ with $ \left|G\right|\leq \aleph_2 $ is in $ \Psi $. Furthermore, if for every uncountable 
regular $ \kappa $ the 
set $  S^{\kappa^{+}} _{\kappa}$ has a 
stationary subset 
in $ I[\kappa^{+}] $, then $ \Phi=\Psi $.
\end{theorem}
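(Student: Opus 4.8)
The plan is to prove the statement by transfinite induction on $\left|G\right|$, but with the conclusion strengthened to an \emph{in-reflection to $\Psi$}: for a suitably built (internally approachable) $\Sigma$-elementary submodel $M$ with $\left|M\right|\subseteq M$ and every $H\in\Phi\cap M$ one has $H\cap M\in\Psi$. This reformulation suffices, because applying it to an $M$ obtained from Fact \ref{fact: existence of submodels} with $\left|M\right|=\left|G\right|$ and $G\in M$ forces $G\subseteq M$ by Fact \ref{fact: submodels subset}, whence $G=G\cap M\in\Psi$. The base case $\left|M\right|=\aleph_0$ is immediate: the countable in-reflection clause of Definition \ref{def: weakly reflecting} gives $H\cap M\in\Phi$, and then \ref{item: main countable} places the countable graph $H\cap M$ in $\Psi$.

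For the inductive step I would fix an $\in$-increasing, continuous chain $\langle N_\xi:\xi<\kappa\rangle$ of $\Sigma$-elementary submodels with $\bigcup_\xi N_\xi=M$, $\left|N_\xi\right|<\left|M\right|$, $H\in N_0$ and each $N_\xi\in M$, where $\kappa$ is uncountable regular (the cofinality of the enumeration). The second clause of Definition \ref{def: weakly reflecting} then yields a club $C_H\subseteq\kappa$ with $H\setminus N_\xi\in\Phi$ for $\xi\in C_H$, and after thinning the chain to $C_H$ (which preserves continuity and $\in$-increase, using Fact \ref{fact: union of chain of submodels} at limits) I may assume $H\setminus N_\xi\in\Phi$ for every $\xi$. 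Now write $H\cap M$ as the disjoint union $(H\cap N_0)\sqcup\bigsqcup_\xi\bigl((H\setminus N_\xi)\cap N_{\xi+1}\bigr)$. Each block is of the form $H'\cap N_{\xi+1}$ with $H'=H\setminus N_\xi\in\Phi\cap N_{\xi+1}$ and $\left|N_{\xi+1}\right|<\left|M\right|$, so the inductive hypothesis — in-reflection to $\Psi$ at the smaller level, applied to the again internally approachable $N_{\xi+1}$ — puts each block in $\Psi$; closure under arbitrary disjoint unions \ref{item: main merging} then gives $H\cap M\in\Psi$. The well-reflection of $\Psi$ in \ref{item: main club subtractable} is the ingredient I expect to need when the set of usable indices is only stationary rather than club, in order to reflect the $\Psi$-membership of the assembled pieces at the limit stages.

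The crux — and the reason the unconditional conclusion is confined to $\aleph_2$ — is securing the chains just described. To use the out-reflection clause I need the chain $\in$-increasing, which forces each $N_\xi$ to be an \emph{element} of $M$; to have $\bigcup_\xi N_\xi=M$ I need these elements to exhaust $M$; reconciling the two is exactly internal approachability of $M$. Along a chain $\langle M_\alpha:\alpha<\kappa^{+}\rangle$ of size-$\kappa$ models the good indices are the approachable points of cofinality $\kappa$, i.e. a stationary subset of $S^{\kappa^{+}}_\kappa$ lying in $I[\kappa^{+}]$. For $\left|G\right|\le\aleph_2$ only $\kappa\in\{\aleph_0,\aleph_1\}$ occur, and the instance $S^{\aleph_2}_{\aleph_1}\cap I[\aleph_2]$ stationary holds in ZFC (the case $\kappa=\aleph_1$ of the stated condition), which is what yields the unconditional half; for larger $G$ one must posit the corresponding stationary sets for every uncountable regular $\kappa$, which is precisely the hypothesis in the second half of the theorem.

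The step I expect to be the main obstacle is pushing the induction through cardinals of countable cofinality. There any chain exhausting $M$ is forced to have length of cofinality $\omega$, the out-reflection clause (which demands an uncountable regular length) is unavailable, and an increasing $\omega$-union of $\Psi$-pieces cannot be disjointified by the tools at hand. Routing such a $G$ through the approachable points supplied at the neighbouring regular cardinals, and using well-reflection of $\Psi$ to transfer the resulting membership, is the delicate part — and it is exactly what the approachability hypothesis is there to underwrite, while its failure in ZFC for $\kappa\ge\aleph_2$ is what leaves the general statement open.
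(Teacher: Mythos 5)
Your conditional half (the ``Furthermore'' part) follows the paper's strategy in outline -- induction on $\left|G\right|$ via internally approachable chains, disjointification $H\cap M=(H\cap N_0)\sqcup\bigsqcup_\xi\bigl((H\setminus N_\xi)\cap N_{\xi+1}\bigr)$, and approachable points of $S^{\kappa^+}_\kappa$ at successors of regulars -- but the unconditional half rests on a false set-theoretic claim. You assert that ``$S^{\aleph_2}_{\aleph_1}\cap I[\aleph_2]$ contains a stationary set'' is a theorem of ZFC. It is not: Shelah's ZFC results about $I[\lambda]$ give stationary approachable sets in $S^{\lambda}_{\mu}$ only when $\mu^{+}<\lambda$, and Mitchell proved (from a Mahlo cardinal) that it is consistent that \emph{no} stationary subset of $S^{\aleph_2}_{\aleph_1}$ lies in $I[\aleph_2]$, i.e.\ that $I[\aleph_2]$ restricted to cofinality-$\omega_1$ points is the nonstationary ideal. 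The paper itself flags exactly this: the first use of the approachability hypothesis is at the induction step from $\aleph_1$ to $\aleph_2$ in Proposition \ref{prop: decomp exist}, which is why a separate ZFC argument for $\left|G\right|=\aleph_2$ is needed (Section \ref{Sec: approach, aleph2}), and why the ZFC question for larger graphs is left open. So your route proves only the conditional statement; the $\aleph_2$ case of your induction has no ZFC foundation.

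What the paper does instead for $\aleph_2$ is a genuine extra idea that your sketch is missing: it \emph{decouples} intersectability from subtractability. One builds (in ZFC, via Lemma \ref{lem: subtractable club exist}) an increasing continuous filtration $\left\langle G_\beta:\ \beta<\omega_2\right\rangle$ of $G$ into $G$-subtractable pieces of size $\leq\aleph_1$, and separately an $\in$-increasing continuous chain $\left\langle M'_\alpha:\ \alpha<\omega_2\right\rangle$ of size-$\aleph_1$ \emph{intersectable} submodels (these exist in ZFC because decomposability at $\aleph_1$ needs no approachability). Subtractability of each $M'_\alpha$ -- the point where your internally-approachable chain would be stuck -- is then recovered indirectly: choosing $\left\langle G_\beta\right\rangle\in M'_0$ by elementarity, one shows $G\cap M'_\alpha=G_\beta$ for some $\beta$, so $G\setminus M'_\alpha\in\Phi$ for free, and Observation \ref{obs: chain very basic} finishes. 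A secondary inaccuracy: you locate the ``main obstacle'' at cardinals of countable cofinality, but the paper handles all singular cardinals in ZFC by Shelah's singular compactness method (the matrix decompositions of Definition \ref{def: decomp}(\ref{item: def decomb sing}) and Claim \ref{claim: decomps sin intersectable}, plus the diagonal argument in Case \ref{case: lambda singular}); the only place approachability is genuinely needed is at successors of regular uncountable cardinals.
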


\begin{proof}
Let $ \Sigma $ be a fixed large enough finite set of formulas that accommodates the weak reflection of $ \Phi $, the well-reflection of $ \Psi $ and 
contains some finitely many additional formulas which will be implicitly defined in the proof. To improve the flow of words we will write simply 
`elementary submodel' instead of `$ \Sigma $-elementary submodel of the universe'. First, we prove the ``Furthermore'' part of 
Theorem \ref{thm: main} and point out explicitly which step requires the approachability assumption.  

For a set $ M $ 
and  graph $ G $ we say that $ M $ is $ G 
$\emph{-intersectable} if $ 
G\cap 
M\in \Psi $. Furthermore, $ 
M $ is $G $\emph{-subtractable} if $G\setminus M \in \Phi$. We call 
$ M $ \emph{intersectable}  (\emph{subtractable})   if
$ M $  is $G $-intersectable ($G $-subtractable) for  every $G\in \Phi\cap M $.

\begin{observation}\label{obs: countable intersectable}
Countable elementary submodels are intersectable.
\end{observation}
\begin{proof}
Let $ M $ be a countable elementary submodel and $ G \in \Phi \cap M $. By weak reflection, we have $ G \cap M \in \Phi $, thus 
by property \ref{item: main countable} $ G \cap M \in \Psi $.
\end{proof}

\begin{definition}\label{def: decomp}
 We say that $ \mathcal{D} $ is a \emph{decomposition} of the elementary submodel $ M $ of size $ \kappa $ if one of the 
 following conditions hold:
 
 \begin{enumerate}
 \item $ \kappa=\aleph_0 $ and $ \mathcal{D}=M $;
 \item\label{item: def decomb reg} $ \kappa $ is an uncountable regular cardinal and $ \mathcal{D}=\left\langle M_\alpha:\ 
 \alpha<\kappa  \right\rangle  $ is an 
 increasing, continuous sequence  of intersectable and subtractable  elementary submodels with $ 
 \bigcup_{\alpha<\kappa}M_\alpha=M $ such that for every $ \alpha<\kappa $:
 \begin{enumerate}
  \item\label{item: reg size} $ \left|M_\alpha\right|<\kappa $,
 \item\label{item: reg contains size} $ \left|M_{\alpha}\right|\subseteq M_{\alpha} $,
 \item\label{item: reg contains previous} $ M_\alpha \in M_{\alpha+1} $;
 \end{enumerate}
 \item\label{item: def decomb sing} $ \kappa $ is a singular cardinal, $ \left\langle \kappa_\alpha:\ \alpha<\mathsf{cf}(\kappa)  \right\rangle $ is an 
 increasing, continuous 
  sequence
    of 
  cardinals with limit $ \kappa $ in which $\kappa_0>\mathsf{cf}(\kappa) $ and  
  \[ \mathcal{D}=\{ M_{\alpha, n}:\ \alpha<\mathsf{cf}(\kappa), n<\omega \} \]
 is a family of intersectable and subtractable elementary submodels with $ \bigcup_{\alpha<\mathsf{cf}(\kappa), 
 n<\omega}M_{\alpha, n}=M $ such that for every $ \alpha<\mathsf{cf}(\kappa) $ and $ n<\omega $:
 \begin{enumerate}
  \item\label{item: sing size} $ \left|M_{\alpha,n}\right|=\kappa_\alpha $,
  \item\label{item: sing contains size} $ \left|M_{\alpha,n}\right|\subseteq M_{\alpha,n} $,
  \item\label{item: sing contains previous} $ M_{\alpha', n'}\in M_{\alpha,n}$ if either  $ n'<n $ or $ n'=n $ and $ 
 \alpha'<\alpha $.
 \end{enumerate}
 \end{enumerate}
  We call $ M $ \emph{decomposable} if it has a decomposition.
\end{definition}
\begin{observation}\label{obs: decomp contains size}
If $ M $ is decomposable then $ \left|M\right|\subseteq M $. 
\end{observation}
\begin{proof}
It is obvious if $ M $ is countable. If $ \kappa>\aleph_0 $ is regular and $ \left\langle M_\alpha:\ \alpha<\kappa  \right\rangle $ is a decomposition 
of $ M $, then $ \alpha \subseteq M_\alpha $ by  
Fact \ref{fact: reg contains size} and therefore $ \kappa \subseteq M $.  Finally, if $ \kappa:=\left|M\right| $ is singular and $ \{ M_{\alpha, 
n}:\ \alpha<\mathsf{cf}(\kappa), n<\omega \} $ is a decomposition 
of $ M $, then $ 
\kappa_\alpha \subseteq 
M_{\alpha,0}\subseteq M 
$ for every $ \alpha<\mathsf{cf}(\kappa) $ by assumption, thus by taking union for $ \alpha<\mathsf{cf}(\kappa) $ we conclude $ \kappa 
\subseteq M $. 
\end{proof}

We are going to show that decomposable elementary 
submodels are intersectable and under the approachability assumption in Theorem \ref{thm: main} they can be constructed 
of any size containing a 
prescribed set $ x $. This implies $ \Phi=\Psi $. Indeed,  for a given $ G\in\Phi $ we construct a decomposable elementary 
submodel $ M $ of size $ \kappa:=\left|G\right| $ with $ G \in M $. Then $ \left|M\right|\subseteq M $ by Observation 
\ref{obs: decomp contains size}. Thus $G \subseteq M $ by Fact \ref{fact: submodels subset} and hence 
$  G \cap M=G $. Since $ M $ is intersectable, we obtain  $ G=G\cap M \in \Psi $.

\begin{observation}\label{obs: chain very basic} 
Assume that $\kappa $ is an infinite cardinal, $G\in \Phi$ and  $ \left\langle M_\alpha:\ \alpha<\kappa  
\right\rangle $  
is an increasing, continuous 
sequence 
of elementary 
submodels such that $ M_0 $ is $G $-intersectable  and $ M_{\alpha+1} $ is $ G\setminus M_{\alpha} 
$-intersectable for every $ \alpha<\kappa $. Then $ 
M:=\bigcup_{\alpha<\kappa}M_{\alpha} $ is a $G $-intersectable elementary submodel.
\end{observation}
\begin{proof}
By Fact \ref{fact: union of chain of submodels}, $ M $ is an elementary submodel. We have $ G\cap M_0\in \Psi $ 
because $ M_0 $ is $ G $-intersectable. Similarly $ (G\setminus M_\alpha) \cap 
M_{\alpha+1}\in \Psi $ because $ M_{\alpha+1} $ is $ G\setminus M_\alpha $-intersectable. Since  $ G\cap M_0$ together with 
$ (G\setminus M_\alpha) \cap M_{\alpha+1} \ (\alpha<\kappa) $ forms an partition of $ G\cap M 
$,  we obtain $ G\cap M\in \Psi $ by  property \ref{item: main merging}.
\end{proof}

\begin{corollary}\label{cor: chain basic}
Assume that $ \kappa $ is an infinite cardinal and    $ \left\langle M_\alpha:\ \alpha<\kappa  
\right\rangle 
$ is an increasing, continuous 
sequence of  subtractable elementary 
submodels such that $ M_\alpha \in 
M_{\alpha+1} $  and $ M_{\alpha+1 }$ is intersectable for each $ \alpha<\kappa $. Then $ 
M:=\bigcup_{\alpha<\kappa}M_{\alpha} $ is an intersectable elementary submodel.
\end{corollary}
\begin{proof}
Let $G\in \Phi\cap M $ be given. We may assume without loss of generality that $G\in M_0 $ and $ M_0 $ is 
also intersectable since otherwise we switch to a suitable terminal segment of the sequence. To reduce the 
statement 
to Observation \ref{obs: chain very basic}, it is enough to show that $ M_{\alpha+1} $ is $G\setminus M_\alpha $-intersectable for every $ 
\alpha<\kappa $. Since $G, M_\alpha \in M_{\alpha+1}  $, we have $ 
G \setminus M_\alpha\in M_{\alpha+1} $. Furthermore, $G \setminus M_\alpha \in \Phi $  because  $ M_\alpha $ is subtractable. But then the 
intersectability of $ 
M_{\alpha+1} $ implies that it is,, in particular,, $ G \setminus M_\alpha$-intersectable.
\end{proof}

\begin{corollary}\label{cor: decomp reg intersectable}
If $ M $ is a decomposable elementary submodel and $ \kappa:=\left|M\right| $ is a regular cardinal, then $ 
M $ is intersectable.
\end{corollary}
\begin{proof}
For $\kappa=\aleph_0 $,  it follows from Observation \ref{obs: countable intersectable}. If $ \kappa $ is an uncountable regular 
cardinal, then we apply Corollary \ref{cor: chain basic} to a decomposition of $ M $ to conclude that $ M $ is intersectable.
\end{proof}

The following claim is an application of the singular compactness method by Shelah.
\begin{claim}\label{claim: decomps sin intersectable}
 If $ M $ is a decomposable elementary submodel and $ \kappa:=\left|M\right| $ is singular, then $ 
 M $ is intersectable.
\end{claim}

\begin{proof}
Let $ \{M_{\alpha,n}:\   \alpha<\mathsf{cf}(\kappa), n < \omega \} $ be a decomposition of $ M $. We 
have $ 
M_{\beta,n}\subseteq 
M_{\alpha,n} $ and $ M_{\beta,n}\subseteq M_{\beta, n+1} $ 
whenever $ 
\beta<\alpha<\mathsf{cf}(\kappa) $ and $ 
n<\omega $ by Fact \ref{fact: submodels subset} via the properties (\ref{item: sing size}), (\ref{item: sing contains size})  and 
(\ref{item: sing contains previous}) of Definition \ref{def: decomp}.
Thus the set $ M_\alpha:= \bigcup_{n<\omega}M_{\alpha,n} $ is  an elementary submodel by 
Fact \ref{fact: union of chain of submodels}.  By considering the inclusion $ M_{\beta,n}\subseteq 
M_{\alpha,n} $ and taking union for $ n<\omega $,   we conclude that $ 
M_\beta \subseteq M_\alpha $ for $ \beta<\alpha<\mathsf{cf}(\kappa) $. 
Thus  Fact \ref{fact: union of chain of submodels} ensures that the set  $ 
M=\bigcup_{\alpha<\mathsf{cf}(\kappa)}M_{\alpha} $ is also  an elementary 
submodel.
\begin{lemma}
$ M_\alpha=\bigcup_{\beta<\alpha}M_\beta $ for every  $ \alpha\in \mathsf{acc}(\mathsf{cf}(\kappa)) $.
\end{lemma}
\begin{proof}
The inclusion $ M_\alpha\supseteq \bigcup_{\beta<\alpha}M_\beta $ is obvious since we have already seen that $ 
M_\beta \subseteq M_\alpha $ for $ \beta<\alpha<\mathsf{cf}(\kappa) $. It remains to prove that $ 
M_\alpha\subseteq \bigcup_{\beta<\alpha}M_\beta $. To do so, it is enough to show that $ M_{\alpha,n}\subseteq \bigcup_{\beta<\alpha}M_\beta 
$ for each $ n<\omega $. Let $ n<\omega $ be fixed. Then $ M_{\alpha,n}\in M_{0,n+1}\subseteq M_0\subseteq \bigcup_{\beta<\alpha}M_\beta 
$
by property  (\ref{item: sing contains previous}). By property (\ref{item: sing size}), we have $ \left|M_{\beta,n} \right|=\kappa_\beta $ and hence 
$ \left|M_\beta\right|=\kappa_\beta $ for every $ \beta<\mathsf{cf}(\kappa) $ 
and  $ \kappa_\alpha=\bigcup_{\beta<\alpha}\kappa_\beta $ since $ \left\langle \kappa_\beta:\ \beta<\mathsf{cf}(\kappa)  \right\rangle $ is 
assumed to 
be continuous. Thus 
\[ \left|M_{\alpha,n}\right|=\kappa_\alpha=\sum_{\beta<\alpha}\kappa_\beta=\sum_{\beta<\alpha}\left|M_\beta\right| 
=\left|\bigcup_{\beta<\alpha}M_\beta \right|. \] Finally, Fact 
\ref{fact: submodels subset} ensures $ M_{\alpha,n}\subseteq \bigcup_{\beta<\alpha}M_\beta$.
\end{proof}
To show that $ M $ is intersectable we fix a $G\in \Phi\cap M $. We may assume without loss of 
generality that $G\in M_{0,0} $, since otherwise we switch to another decomposition of $ M $ by deleting an initial segment of the rows 
and columns of the original decomposition.  The sequence $ \left\langle M_{0,n}:\ n<\omega  \right\rangle $ satisfies the premise of 
Corollary \ref{cor: chain basic}, thus $ 
M_0 $ is intersectable and in particular $ 
G $-intersectable. We intend to apply Observation \ref{obs: chain very basic}, thus it remains to show that
\begin{lemma}
For every $ \alpha<\mathsf{cf}(\kappa) $, $ M_{\alpha+1} $ is $ G\setminus M_{\alpha} $-intersectable.
\end{lemma}
\begin{proof}
The graphs $ G\cap M_{\alpha+1,0} $ and  $ (G\setminus M_{\alpha+1,n}) \cap M_{\alpha+1,n+1}\ 
(n<\omega) $ are elements of $ M_\alpha $ (see property (\ref{item: sing contains previous})) and form an partition of $ 
G\cap M_{\alpha+1} $. It is enough to show that  they are in $ \Psi $. Indeed, then $ (G\cap 
M_{\alpha+1,0})\setminus M_\alpha $ and  $ \left[(G\setminus M_{\alpha+1,n}) \cap 
M_{\alpha+1,n+1} \right]\setminus M_\alpha \ 
(n<\omega) $ are also in $ \Psi $ because $ \Psi $ is well-reflecting by \ref{item: main club subtractable}. Since they form a 
partition of 
$  (G\cap M_{\alpha+1}) \setminus M_\alpha =(G\setminus M_\alpha) \cap M_{\alpha+1} $, we will 
be done by property \ref{item: main merging}. 

Clearly $ G\cap M_{\alpha+1,0} \in \Psi$ because $ M_{\alpha+1,0} $ is intersectable and $ G \in \Phi \cap M_{\alpha+1,0} $.
Since $ M_{\alpha+1,n} $ is subtractable and contains $ G $, we have $ G\setminus  M_{\alpha+1,n} \in \Phi $. Then  
$ (G\setminus  M_{\alpha+1,n})\cap M_{\alpha+1,n+1} \in \Psi$ because $ M_{\alpha+1,n+1} $ is intersectable and contains  $ 
G\setminus  M_{\alpha+1,n} $. This concludes the proof.
\end{proof}
It follows from Observation \ref{obs: chain very basic} that $ M $ is $G $-intersectable. Since $G\in \Phi\cap M $ 
was arbitrary it means that $ M $ is intersectable.
\end{proof}

\begin{proposition}\label{prop: decomp is intersectable}
Every decomposable elementary submodel $ M $ is intersectable.
\end{proposition}
\begin{proof}
It is immediate from Corollary \ref{cor: decomp reg intersectable} and Claim \ref{claim: decomps sin intersectable}.
\end{proof}

\subsection{Finding a club of intersectable and subtractable elementary submodels}
In the following lemmas, we have a $ \kappa $-chain of elementary submodels  and we are looking for a club of $ \kappa 
$ such that the corresponding submodels satisfy certain properties.

\begin{lemma}\label{lem: subtractable club exist} 
For every uncountable regular cardinal $ \kappa $ and every  increasing, continuous sequence $ \left\langle M_\alpha:\ 
\alpha<\kappa  
\right\rangle $ of elementary submodels satisfying (\ref{item: reg size}), (\ref{item: reg contains size})  and 
(\ref{item: reg contains previous}) of Definition \ref{def: decomp},
\[ \{ \alpha<\kappa:\ M_\alpha \text{ is subtractable} \}\in \mathsf{club}(\kappa). \]
\end{lemma}
\begin{proof}
Let $ M:= \bigcup_{\alpha<\kappa} M_\alpha$. For every  $ 
G \in  \Phi\cap M $, let $ C_{G} $ be a club of $ \kappa $ such that  $ 
M_\alpha $ is $G $-subtractable for  $ \alpha \in  C_{G} $ (see Definition \ref{def: weakly reflecting}). For $ \alpha<\kappa $ let 
\[ C_\alpha:=\bigcap\{ C_G:\ G\in \Phi \cap M_\alpha  \}. \]

 Note that $ C_\alpha $ is a club because  $ 
\left|M_\alpha\right|<\kappa 
$ ensures that $ C_\alpha $ is the intersection of less than $ \kappa $  clubs. Let
\[ C:=\mathsf{acc}(\kappa)\cap\triangle_{\alpha<\kappa} C_\alpha. \]
Then $ C $ is a club.  Let $\alpha \in C $ be fixed  and take a $G\in \Phi\cap M_\alpha $. Since $ \alpha $ is a 
limit ordinal and the sequence $ \left\langle M_\alpha:\ \alpha<\kappa  
\right\rangle $ is continuous there is a $\beta<\alpha $ with $G\in M_{\beta} $. By the definition of $ C $ 
we know that $ \alpha \in 
C_\beta \subseteq C_{G} $. But then, by the definition of $ C_{G} $ we conclude that $ M_\alpha $ 
is $G $-subtractable. Since $ G \in M_\alpha \cap \Phi $ was arbitrary, it means that $ M_\alpha $ is subtractable.

\end{proof}
\begin{lemma}\label{lem: str many intersectable}
Let $ \kappa $ be an uncountable regular cardinal  and let 
$ \left\langle M_\alpha:\ \alpha<\kappa  \right\rangle $ be an increasing, continuous sequence  of elementary submodels 
satisfying (\ref{item: reg size}), (\ref{item: reg 
contains size})  and 
(\ref{item: reg contains previous}) of Definition \ref{def: decomp}. If  $I:= \{ \alpha<\kappa:\ M_\alpha\text{ is 
intersectable}\}\in \mathsf{stat}(\kappa) $, then

\[ \{ \alpha<\kappa:\ M_\alpha \text{ is subtractable and intersectable} \}\in \mathsf{club}(\kappa). \]
\end{lemma}
\begin{proof}
By applying Lemma \ref{lem: subtractable club exist}, we pick a club $ C $ of $ \kappa $ such that $ M_\alpha $ is subtractable for each $ 
\alpha\in C $.  Then $ I\cap C $ is unbounded in $ \kappa $ and $ 
M_\alpha $ is intersectable and subtractable for $ \alpha \in I\cap C  $ by definition. We claim that $ I\cap C $ is closed as 
well. Suppose for a contradiction that it is not and let $\beta = \min \overline{I\cap C}\setminus I\cap C $. Let $ 
\left\langle \beta_{\xi}: \xi< \mathsf{cf}(\beta)  \right\rangle $ be an increasing, continuous sequence with limit $ \beta $. 
Then  $ 
\left\langle M_{\beta_\xi}:\ 
\xi<\mathsf{cf}(\beta)   \right\rangle $ is an increasing, continuous sequence of intersectable and subtractable elementary 
submodels with 
$\bigcup_{\xi<\mathsf{cf}(\beta) }M_{\beta_\xi} = M_{\beta}$  and 
$ M_{\beta_\xi}\in M_{\beta_{\xi+1}} $ for each $ \xi<\mathsf{cf}(\beta) $, thus $ \beta \in I $ by Corollary \ref{cor: chain 
basic}.  
But then  $ \beta \in I \cap C $ because $ C $ is closed and this 
contradicts the choice of $ \beta $. Therefore $I\cap C $ is closed and thus it is a 
desired club.
\end{proof}

\begin{lemma}\label{lem: succ intersectable and subtractable}
Let $ \kappa $ be an uncountable regular cardinal  and let 
$ \left\langle M_\alpha:\ \alpha<\kappa  \right\rangle $ be an increasing, continuous sequence  of  elementary submodels 
satisfying (\ref{item: reg size}), (\ref{item: reg 
contains size})  and 
(\ref{item: reg contains previous}) of Definition \ref{def: decomp}. If  $ M_{\alpha+1} $ is subtractable and 
intersectable for every $ \alpha<\kappa $, then

\[ \{ \alpha<\kappa:\ M_\alpha \text{ is subtractable and intersectable} \}\in \mathsf{club}(\kappa). \]
\end{lemma}
\begin{proof}
It is enough to show that $I:= \{ \alpha<\kappa:\ M_\alpha\text{ is intersectable}\}\in \mathsf{stat}(\kappa) $ because then Lemma \ref{lem: 
str many intersectable} provides the desired club. To do so, it is sufficient to prove that  $I \supseteq S^{\kappa}_{\omega}  
$. Let $ \alpha \in 
S^{\kappa}_{\omega} $ be given. We need to show that  $ 
M_\alpha $ is intersectable.  Take an increasing  sequence  $ \left\langle \beta_n:\ n<\omega  \right\rangle 
$ of successor ordinals with limit $ \alpha 
$. Then $ \left\langle M_{\beta_n}:\ n<\omega  \right\rangle $  is an increasing (and continuous) 
sequence of intersectable and 
subtractable elementary submodels  with $ \bigcup_{n<\omega }  M_{\beta_n}= M_\alpha$  
and $ M_{\beta_n} \in M_{\beta_{n+1}} $ for $ n<\omega $, thus $ 
M_\alpha $ is intersectable by Corollary \ref{cor: chain basic}. 
\end{proof}

\subsection{The existence of decomposable elementary submodels}
It remains to construct decomposable submodels of a given size $ \kappa $ containing a prescribed set $ x $. For technical reasons we make the 
second requirement stronger and demand the existence of a decomposition in which every member contains $ x $:
\begin{proposition}\label{prop: decomp exist}
For every set $ x $ and infinite cardinal $ \kappa $, there exists an elementary submodel $ M $ of size $ \kappa $  that admits 
a decomposition $ \mathcal{D} $ in which each elementary submodel contains $ x $. Furthermore, if $ \kappa $ is singular and a 
sequence $ \left\langle \kappa_\alpha:\ \alpha<\mathsf{cf}(\kappa)  \right\rangle $  as in Definition \ref{def: decomp} is given, then $ \mathcal{D} 
$ can be chosen accordingly.
\end{proposition}
\begin{proof}
Let $ x $ be fixed. We apply transfinite induction on $ \kappa $. 
\begin{case}\label{case: alep0}
$ \kappa=\aleph_0 $
\end{case}
Any elementary submodel containing $ x $ is 
suitable (see Definition \ref{def: decomp}).\\
 
Suppose now that $ \kappa>\aleph_0 $. 
\begin{lemma}\label{lem: subtractable intersectable smaller enough}
If for every set $ x' $ and every infinite cardinal $ \lambda<\kappa $ there is a intersectable and subtractable elementary submodel $ M' 
$  with $ \lambda=\left|M'\right|\subseteq M' $ that 
contains $ x' $, then Proposition \ref{prop: decomp exist} holds for $ \kappa $.
\end{lemma}
\begin{proof}
Roughly speaking, we build a decomposition by transfinite recursion based on the assumption and define $ M $ accordingly. Assume first that $ 
\kappa $ is a regular. We are going to build an increasing, continuous sequence $ \left\langle M_\alpha:\ 
\alpha<\kappa  \right\rangle $ of elementary submodels containing $ x $  satisfying (\ref{item: reg size}), 
(\ref{item: reg contains size})  and 
(\ref{item: reg contains previous}) in which $ M_{\alpha+1} $ 
is subtractable and intersectable 
for every $ \alpha<\kappa $ and set $ M:=\bigcup_{\alpha<\kappa}M_\alpha $. After this is done,  Lemma 
\ref{lem: succ intersectable and subtractable} provides a club of $ \kappa $ such that the 
corresponding subsequence is a decomposition of $ M $. Let $ M_0 $ be any countable elementary submodel that contains $ x' $. If  $ 
\alpha\in \mathsf{acc}(\kappa) $ and $ M_\beta $ is defined for $ \beta<\alpha $, then we take $ 
M_\alpha:=\bigcup_{\beta<\alpha}M_\beta $.  If $ \alpha<\kappa $ and $ M_\alpha $ is already defined,  then by assumption  we can take a  
intersectable 
and subtractable elementary submodel $ M_{\alpha+1} $ with 
$\left|\alpha\right|+\aleph_0=\left| 
M_{\alpha+1} 
 \right|\subseteq M_{\alpha+1} $  that contains $M_\alpha$. It follows directly from the construction that $ \left\langle 
 M_\alpha:\ 
 \alpha<\kappa  \right\rangle $ satisfies the premise of Lemma \ref{lem: succ intersectable and subtractable}.

Assume now that $ \kappa $ is a singular cardinal. Let $ \left\langle \kappa_\alpha:\ \alpha<\mathsf{cf}(\kappa)  \right\rangle $ be a given 
increasing, 
continuous sequence of cardinals with limit $ \kappa $ and $\kappa_0>\mathsf{cf}(\kappa) $. We let $ M_{0,0} $ to be an elementary submodel 
with $ \kappa_{0}=\left|M_{0,0}\right|\subseteq M_{0,0} $ that contains $ x $.  Suppose that there is some $ 
\alpha<\mathsf{cf}(\kappa) $ and $ n<\omega $ such that $ (\alpha,n)\neq (0,0) $ and $ M_{\alpha', n'} $ is already defined whenever    $ n'<n $ or 
$ n'=n $ and $ 
 \alpha'<\alpha $. Then we pick a  intersectable and subtractable elementary submodel $ M_{\alpha,n} $ with $\kappa_\alpha=\left| 
 M_{\alpha,n} 
 \right|\subseteq M_{\alpha,n} $  that contains $\{ M_{\alpha', n'}:\ (n'<n)\vee 
 (n'=n)\wedge 
 (\alpha'<\alpha) \}$.  
Then
  $M:=\bigcup_{\alpha<\mathsf{cf}(\kappa), n<\omega}M_{\alpha,n} $ is  a desired decomposable elementary 
  submodel.
\end{proof}

\begin{case}\label{case: limit cardinal}
$ \kappa $ is a limit cardinal.
\end{case}

\begin{observation}\label{obs: pick intersectable and subtractable}
For every set $ x' $ and infinite cardinal $ \lambda <\kappa $, there is a intersectable and subtractable elementary submodel $ M' 
$  with $ \lambda=\left|M'\right|\subseteq M' $ that 
contains $ x' $.
\end{observation}
\begin{proof}
By applying the induction hypotheses of Proposition \ref{prop: decomp exist} with $ x' $ and $ \lambda^{+}<\kappa $, we obtain a decomposable 
$ 
M $ of size $ \lambda^{+} $ and a decomposition $ \left\langle M_\alpha:\ \alpha<\lambda^{+}  \right\rangle $ of it where 
each $ M_\alpha 
$ contains $ x' $. For every large enough $ \alpha<\lambda^{+} $ we must have $ 
\left|M_\alpha\right| =\lambda $. Let  $M':= M_{\alpha} $ for the smallest  $ \alpha $ for which $ \left|M_\alpha\right| =\lambda $.
\end{proof}

Observation \ref{obs: pick intersectable and subtractable} ensures that the premise of Lemma \ref{lem: subtractable intersectable smaller enough} 
holds. This concludes the 
induction step when $ \kappa $ is a limit cardinal.\\

It remains to prove Proposition \ref{prop: decomp exist} for successor cardinals.  
\begin{case}\label{case}
$ \kappa=\omega_1 $
\end{case}
We take an increasing, continuous sequence $ \left\langle M_\alpha:\ \alpha<\omega_1  \right\rangle $ of 
countable elementary submodels with $ x \in M_0 $ and $ M_\alpha \in M_{\alpha+1} $ for $ \alpha<\omega_1 $ 
and set $ M:=\bigcup_{\alpha<\omega_1} M_\alpha $.  Let $ C $ be a club of $ \omega_1 $ such that $ M_\alpha $ 
is subtractable for every $ \alpha \in C $ (exists by Lemma \ref{lem: subtractable club exist}). Since countable elementary submodels are 
intersectable (see Observation 
\ref{obs: countable intersectable}), the subsequence of $ 
\left\langle M_\alpha:\ \alpha<\omega_1  \right\rangle $ corresponding to $ C $ is a decomposition of $ M $.\\

 Suppose now that $ \kappa=\lambda^{+} $ where $ \lambda>\aleph_0 $ and fix an increasing, 
continuous sequence $ \left\langle \lambda_\alpha:\ \alpha<\mathsf{cf}(\lambda ) \right\rangle $ of cardinals with  $ 
\lambda_0>\mathsf{cf}(\lambda) $ and 
limit $ 
\lambda $, if $ \lambda $ is singular.  We are going to build an increasing, continuous 
sequence $ \left\langle M_\alpha:\ \alpha<\lambda^{+}  \right\rangle $ of elementary submodels  with $ x \in 
M_0 $ 
together with a sequence $ \left\langle 
 \mathcal{D}_\alpha:\ \alpha<\lambda^{+}  
 \right\rangle 
 $  such that for every $ \alpha<\lambda^{+} $:
 
 \begin{enumerate}[label=(\roman*)]
 \item $ \left|M_\alpha\right|=\lambda $,
 \item $ \left|M_\alpha\right| \subseteq M_\alpha $,
 \item\label{item: superdecom reg} $ \mathcal{D}_{\alpha+1}=\left\langle M^{\alpha+1}_{\beta}:\ \beta<  \lambda \right\rangle $ is a 
 decomposition  of $ 
 M_{\alpha+1} $ with $ \left\langle (M_\beta, \mathcal{D}_\beta):\   \beta \leq \alpha\right\rangle \in 
 M^{\alpha+1}_0 $  if $ 
 \lambda $ is regular,
 \item\label{item: superdecom sing} $ \mathcal{D}_{\alpha+1}= \{ M^{\alpha+1}_{\beta,n}:\ \beta<  \mathsf{cf}(\lambda),\ n<\omega 
 \} 
 $   is a decomposition  of $ 
  M_{\alpha+1} $ with $ \left\langle (M_\beta, \mathcal{D}_\beta):\   \beta \leq \alpha\right\rangle \in 
  M^{\alpha+1}_{0,0} $ 
  corresponding to $ \left\langle \lambda_\alpha:\ \alpha<\mathsf{cf}(\lambda ) \right\rangle $ if $ \lambda $ is singular.
 \end{enumerate} 
 We let $ \mathcal{D}_\alpha=\emptyset $ for $ \alpha \in \{ 0 \}\cup \mathsf{acc}(\lambda^{+}) $. 

 \begin{observation}
 $ \left\langle M_\alpha:\ \alpha<\lambda^{+}  
 \right\rangle $ satisfies (\ref{item: reg size}), (\ref{item: reg contains size}) and (\ref{item: reg contains previous}) of Definition 
 \ref{def: decomp}.
 \end{observation}
 \begin{observation}\label{cor: superdecomp contains}
 $ M_{\beta}\cup \mathcal{D}_\beta \cup \{M_{\beta}, \mathcal{D}_\beta \} \subseteq M_{\alpha} $  for $ \beta<\alpha $.
 \end{observation}
 \begin{proof}
Since $ \alpha \subseteq M_\alpha $ by Fact \ref{fact: reg contains size}, it follows from properties  \ref{item: superdecom reg} and  
\ref{item: superdecom sing} and Fact \ref{fact: submodels subset}.
 \end{proof}

The construction of $ \left\langle M_\alpha:\ \alpha<\lambda^{+}  \right\rangle $ and $ \left\langle 
 \mathcal{D}_\alpha:\ \alpha<\lambda^{+}   \right\rangle $  can be done by a straightforward transfinite recursion. We apply 
 the induction hypotheses 
with respect to Proposition \ref{prop: decomp exist} with the set $ x$ and cardinal $ \lambda $ to get $ 
M_0 $ (see also Observation 
\ref{obs: decomp contains size}), take  
$ M_\alpha:=\bigcup_{\beta<\alpha}M_\beta $ if $ \alpha\in \mathsf{acc}(\lambda^{+})  $ 
and  use the induction hypotheses 
 with set $ \left\langle (M_\beta, \mathcal{D}_\beta):\   \beta \leq \alpha\right\rangle $ and cardinal $ \lambda $ to 
 get $ M_{\alpha+1} $ and $ \mathcal{D}_{\alpha+1} $. The recursion is done and we set $ 
M:=\bigcup_{\alpha<\lambda^{+}}M_\alpha $. 

We are going to show 
that there is a club of $ \lambda^{+} $ such that the corresponding subsequence of $ \left\langle 
M_\alpha:\ \alpha<\lambda^{+} \right\rangle $ is a decomposition of $ M $, in other words  

\[ \{ \alpha<\lambda^{+}:\ M_\alpha \text{ is subtractable and intersectable} \}\in \mathsf{club}(\lambda^{+}). \]

By  Lemma \ref{lem: str many intersectable} it is enough to show that
\[ I:=\{ \alpha<\lambda^{+}:\ M_\alpha \text{ is intersectable} \}\in \mathsf{stat}(\lambda^{+}). \]

\noindent Roughly speaking, the idea is to construct a decomposition of $ M_\alpha $ for stationarily many $ \alpha \in I $  ``diagonally'' by  using 
the 
members of the 
decompositions of 
some previous terms. We have two cases depending on if $ \lambda $ is regular or singular. 
\begin{case}\label{case: lambda singular}
$ \kappa=\lambda^{+} $ with $\lambda> \mathsf{cf}(\lambda) $
\end{case}
It is sufficient to show that $I \supseteq S^{\lambda^{+}}_{\omega} $. Let $ \alpha \in 
S^{\lambda^{+}}_{\omega} $ be fixed and take an increasing  sequence $ \left\langle \beta_n:\ n<\omega 
\right\rangle $ of successor ordinals with limit $ \alpha $. We pick recursively $ \gamma_n<\mathsf{cf}(\lambda) $ and $ i_n<\omega $ and set $ 
M'_n:=M^{\beta_n}_{\gamma_n, i_n} $. Let $ \gamma_0:=i_0:=0 $. Suppose that $ 
i_n<\omega $ and $ 
\gamma_n<\mathsf{cf}(\lambda) $ are already defined. We know that $ \beta_n 
\in M_{\beta_{n+1}} $ by Fact \ref{fact: reg contains size}. Therefore  we can choose $ i_{n+1} $ 
and $ \gamma_{n+1}>\gamma_n $  to satisfy  $ 
\beta_n \in M'_{n+1} $.  It follows  that $ \mathcal{D}_{\beta_n} \cup\{ M_{\beta_n}, \mathcal{D}_{\beta_n} \}\subseteq  M'_{n+1}$  by 
property 
\ref{item: superdecom sing} and Fact \ref{fact: submodels subset} where we use that $ \left|\mathcal{D}_{\beta_n}\right| = 
\mathsf{cf}(\lambda)\leq \lambda_{\gamma_{n+1}}= \left|M'_{n+1}\right|$.  In 
particular 
$M'_n \in M'_{n+1} $. Then  $ M'_n \subseteq  M'_{n+1}$ by Fact \ref{fact: submodels subset} because $ 
\left|M'_n\right|=\lambda_{\gamma_n}<\lambda_{\gamma_{n+1}}=\left|M'_{n+1}\right| $. Note that $ \left\langle M'_n:\ n<\omega  \right\rangle 
$ is an increasing (and  continuous) sequence of intersectable and subtractable elementary submodels, therefore $M':= 
\bigcup_{n<\omega} 
M'_n $ is intersectable by Corollary \ref{cor: chain basic}.  Clearly $ M'\subseteq M_\alpha $ because $ M'_n \subseteq M_{\beta_n}\subseteq 
M_\alpha 
$. Suppose first that $ \mathsf{cf}(\lambda)=\omega $. Then $ \sup_{n<\omega}\gamma_n=\omega $ because $ \left\langle \gamma_n:\ 
n<\omega 
\right\rangle $ is strictly increasing. Thus $ \left|M'\right|= \sum_{n<\omega} \left|M'_n\right|=\sum_{n<\omega} 
\lambda_{\gamma_n}=\lambda$. Since $ M_{\beta_n}\in M'_{n+1}\subseteq M' $ and $ \left|M_{\beta_n}\right| =\lambda$, we have $ 
M_{\beta_n}\subseteq M' $ by Fact 
\ref{fact: submodels subset}. It follows that  $ M_\alpha \subseteq M' $ and hence $ M'=M_\alpha $, thus $ M_\alpha $ is intersectable because so 
is $ M' 
$.

Suppose now that $ 
\mathsf{cf}(\lambda) >\omega$. Roughly speaking, we take the whole column from each decomposition instead of a single element, put together 
these columns, and the resulting matrix will be a decomposition of $ M_\alpha $ after the deletion of some initial rows. To make this precise, let 
$ \delta_0:= 
\sup_{n<\omega}\gamma_n<\mathsf{cf}(\lambda) $. We show that for the submodels 
$ M'_{\gamma,n}:=M^{\beta_n}_{\delta_0+\gamma, i_n} $, the family $ \mathcal{D}:=\{ M'_{\gamma,n}:\ \gamma<\mathsf{cf}(\lambda), 
n<\omega \} $ is a decomposition of $ M_\alpha $ and hence $ M_\alpha $ is intersectable by Proposition 
\ref{prop: decomp is intersectable}.  The sequence $ \left\langle \lambda_{\delta_0+\gamma}:\ \gamma<\mathsf{cf}(\lambda)  \right\rangle $ is 
increasing and 
continuous and has limit $ \lambda $ because it is a non-empty terminal segment of such a sequence. We have $\lambda_{\delta_0+\gamma}= 
\left|M'_{\gamma,n}\right| \subseteq M'_{\gamma,n}$ because $ M'_{\gamma,n}=M^{\beta_n}_{\delta_0+\gamma, 
i_n}$ and $ M^{\beta_n}_{\delta_0+\gamma, i_n} $ has these properties by assumption (see  properties (\ref{item: sing size}) and 
(\ref{item: sing contains size}) of 
Definition \ref{def: decomp}). We turn to the verification of property (\ref{item: sing contains previous}). First of all, containment of previous 
elements of the 
same column  in matrix $ \mathcal{D} $ follows directly by
$M'_{\gamma',n}=M^{\beta_n}_{\delta_0+\gamma', i_n} \in M^{\beta_n}_{\delta_0+\gamma, i_n}=M'_{\gamma,n} $ for $ \gamma'<\gamma $ 
(see 
property (\ref{item: sing contains previous})).  Note that the $ n $th column of $ \mathcal{D} $ is a terminal segment of the $ i_n $th column of 
$ \mathcal{D}_{\beta_n} $. We have already seen  that $\mathcal{D}_{\beta_n} \subseteq  M'_{n+1}$, thus
\[ \mathcal{D}_{\beta_n} \subseteq  M'_{n+1}=M^{\beta_{n+1}}_{\gamma_{n+1}, i_{n+1}}\subseteq M^{\beta_{n+1}}_{\delta_0, i_{n+1}}=  
M'_{0,n+1}. \]  
But then for $ m<n+1 $ and $ \gamma<\mathsf{cf}(\lambda) $ we have
\[  \mathcal{D}_{\beta_m}\subseteq M'_{0,m+1}\subseteq M'_{0, 
n+1}\subseteq M'_{\gamma, n+1}.  \]

It remains to check that $ \bigcup_{\gamma<\mathsf{cf}(\lambda), 
n<\omega}M'_{\gamma,n} =M_\alpha$. The containment $ \bigcup_{\gamma<\mathsf{cf}(\lambda), 
n<\omega}M'_{\gamma,n} \subseteq M_\alpha$ is clear because  $ M'_{\gamma,n} \subseteq M_{\beta_n}\subseteq M_\alpha $ for each $ 
\gamma<\mathsf{cf}(\lambda) $ and $ n<\omega $ by the definition of $ 
M'_{\gamma,n} $. To show 
$ \bigcup_{\gamma<\mathsf{cf}(\lambda), n<\omega}M'_{\gamma,n} \supseteq M_\alpha$,  note that  $ \lambda \subseteq 
 \bigcup_{\gamma<\mathsf{cf}(\lambda), n<\omega}M'_{\gamma,n}$ because $ \lambda_{\delta_0+\gamma}= 
 \left|M'_{\gamma,n}\right| \subseteq M'_{\gamma,n}$ and $\lambda= 
 \bigcup_{\gamma<\mathsf{cf}(\lambda)}\lambda_{\delta_0+\gamma} $. Furthermore,  $M_{\beta_n}\in  
 M'_{0,n+1}\subseteq
 \bigcup_{\gamma<\mathsf{cf}(\lambda), n<\omega}M'_{\gamma,n} $ for every $ n<\omega $. But then by Fact \ref{fact: 
 submodels subset}, $ 
 M_{\beta_n} 
\subseteq  
 \bigcup_{\gamma<\mathsf{cf}(\lambda), n<\omega}M'_{\gamma,n}$ for each $ n<\omega $, thus $ M_\alpha \subseteq  
  \bigcup_{\gamma<\mathsf{cf}(\lambda), n<\omega}M'_{\gamma,n} $. Therefore $ M_\alpha $ is decomposable and hence intersectable by 
  Proposition 
  \ref{prop: decomp is intersectable}. This 
  completes 
to proof in the case $ \mathsf{cf}(\lambda)>\omega $ and therefore the proof of $I \supseteq S^{\lambda^{+}}_{\omega} $ is also complete.
\begin{case}\label{case: lambda regular}
$ \kappa=\lambda^{+} $ with $ \lambda=\mathsf{cf}(\lambda)> \aleph_0$
\end{case}
Now we are going to use the approachability assumption in Theorem \ref{thm: main}. Let $ S\subseteq 
S^{\lambda^{+}}_{\lambda} $ be stationary and approachable (see Definition \ref{def: approach ideal}). To demonstrate that $ I $ is stationary, it
is sufficient to show 
that $I \supseteq S $. Let $ \alpha \in 
S $ be fixed. It is 
enough to prove  that  $ M_\alpha $  is decomposable. By the approachability of $ S $, there is a $ c_\alpha \subseteq \alpha 
$ 
cofinal in $ \alpha $ with $ \mathsf{ot}(c_\alpha)=\lambda $ such 
that all proper initial segments of $ c_\alpha $ are in $ M_{\alpha} $. Let $ \left\langle \beta_\xi:\ \xi<\lambda  
\right\rangle $ be the increasing enumeration of $ c_\alpha $. By modifying  $ c_\alpha $ if necessary, we can 
ensure that 
\begin{itemize}
\item $ \left\langle \beta_\xi:\ \xi<\lambda  
\right\rangle $ is continuous,
\item $ \beta_\xi $ is a successor ordinal for every $ \xi\in 
\lambda\setminus \mathsf{acc}(\lambda) $,
\item $ \left\langle \beta_\zeta:\ \zeta\leq\xi  
\right\rangle \in M_{\beta_{\xi+1}}$ for every $ \xi<\lambda $.
\end{itemize}

  First, we build an 
increasing, 
continuous 
sequence $ \left\langle M'_\xi:\ \xi<\lambda  \right\rangle $ of elementary submodels with $ \bigcup_{\xi<\lambda} 
M'_\xi=M_\alpha$ where $ 
M'_{\xi} $ is subtractable and intersectable for every $ \xi\in 
\lambda\setminus \mathsf{acc}(\lambda) $ and satisfies the properties (\ref{item: reg 
size}), 
(\ref{item: reg contains size})  and (\ref{item: reg contains previous})  of Definition \ref{def: decomp} (with $ \lambda $ in the place of $ \kappa 
$).  After this is done,  Lemma \ref{lem: succ intersectable and subtractable} provides a club of $ \lambda $ such that the corresponding 
subsequence of $ 
\left\langle 
M'_\xi:\ \xi<\lambda  \right\rangle $ is a 
decomposition of $ M_\alpha $.

We build the sequence $ \left\langle M'_\xi:\ \xi<\lambda  \right\rangle $ by transfinite recursion. Let $ M'_0:=M^{\beta_0}_{0} $. For  $\xi\in 
\mathsf{acc}(\lambda) $ we take $ 
M'_\xi:=\bigcup_{\zeta<\xi}M'_\zeta  $. We let $ 
M'_{\xi+1}:=M^{\beta_{\xi+1}}_{\gamma_\xi} $, 
where $ \gamma_\xi $ is the smallest 
ordinal for which $\left\langle \beta_\zeta:\ \zeta\leq\xi  \right\rangle \in M^{\beta_{\xi+1}}_{\gamma_\xi} $ and $ 
\left|M'_{\xi} \right| \subseteq \left|M^{\beta_{\xi+1}}_{\gamma_\xi}\right| $.  Note that $ 
M'_{\xi}\in M'_{\xi+1} $ 
because $ M'_{\xi} $ is definable from the parameters $ \left\langle \beta_\zeta:\ \zeta\leq\xi  \right\rangle $ and $ \left\langle 
(M_\beta,\mathcal{D}_\beta):\ \beta\leq  \beta_{\xi}\right\rangle $ which are both in $ M'_{\xi+1} $ (see property \ref{item: 
superdecom reg}). The 
recursion is done. It follows 
directly from the construction that $ \lambda>\left|M'_{\xi}\right| \subseteq M'_{\xi}$  
for every $ 
\xi<\lambda $ (for limit steps see Fact \ref{fact: reg contains size}), moreover, $ M_{\xi} $ is subtractable and intersectable for  every $ 
\xi\in 
\lambda\setminus \mathsf{acc}(\lambda) $. Hence the proof of the case where $ 
\kappa $ is a successor cardinal is complete. This concludes 
the proof of Proposition \ref{prop: decomp exist}.
\end{proof}
As we have already seen (right after Observation \ref{obs: decomp contains size}), $ \Phi=\Psi $ is implied by the 
conjunction of Propositions 
\ref{prop: decomp is intersectable} and \ref{prop: decomp exist}. The first usage of the approachability assumption  was at 
the induction step of Proposition \ref{prop: decomp exist} from $ \aleph_1 $ to $ \aleph_2 $. 
Therefore it remains to justify in ZFC that every $ G \in \Phi $ of size $  \aleph_2 $ is in $ \Psi $.

\section{Graphs of size $ \aleph_2 $}\label{Sec: approach, aleph2}

Let $ G\in \Phi $ with $ \left|G\right|= \aleph_2 $ be given. First of all, we claim that there is an increasing, continuous 
sequence $ 
\left\langle G_\alpha:\ \alpha<\omega_2  \right\rangle $ of subsets of $ G $ exhausting $ G $ such that $ G_\alpha $ is $ G 
$-subtractable and smaller than $ \aleph_2 $ for each $ \alpha $. Indeed, let $ \left\langle 
M_\alpha:\ \alpha<\omega_2  \right\rangle $ be 
an 
increasing, continuous 
and $ \in $-increasing sequence of elementary submodels containing $ G $ with $\aleph_1= \left|M_\alpha\right| 
\subseteq M_\alpha$ 
for each $ \alpha< \omega_2$.  By Lemma \ref{lem: subtractable club exist} we can assume that $ M_\alpha $ is subtractable for every $ 
\alpha<\omega_2 $ by 
switching to a subsequence corresponding to a suitable club of $ \omega_2 $. Then $ G_\alpha:=G \cap M_\alpha $ is as 
desired.  

Let 
$ \left\langle M'_\alpha:\ \alpha<\omega_2  \right\rangle $ be an increasing, continuous 
and $ \in $-increasing sequence of elementary submodels with   $G  \in M'_0
$ such that 
$\aleph_1= \left|M'_\alpha\right| \subseteq M'_\alpha$  and $ M'_{\alpha} $ is  intersectable for every $ \alpha \in\omega_2\setminus 
\mathsf{acc}(\omega_2) $.  
The existence of  
intersectable  elementary submodels $ M $ with $ \aleph_1=\left|M\right|\subseteq M $ containing a prescribed set was 
shown in ZFC. 
Therefore the
sequence 
$ \left\langle M'_\alpha:\ \alpha<\omega_2  \right\rangle $ can be constructed by a straightforward transfinite 
recursion by taking:  an intersectable elementary submodel $ M_0' $ with $ \aleph_1=\left| M'_{0}\right|\subseteq  
M'_{0}$ containing $ G $, union at limit steps and  an intersectable elementary submodel $ M'_{\alpha+1} $ with $ 
\aleph_1=\left| 
M'_{\alpha+1}\right|\subseteq  
M'_{\alpha+1}$ 
containing $ M'_\alpha $  in successor steps. 

We claim that $ M'_\alpha $ is $ G $-subtractable for each $ \alpha<\omega_2 $.  Let $ \alpha $ be fixed. By elementarity we 
can find a  $ 
\left\langle G_\beta:\ \beta<\omega_2  \right\rangle \in M'_0 $ as described in the first paragraph. Since $ G_\beta $ 
is $ G 
$-subtractable for every $ 
\beta<\omega_2 $, it is enough to 
show that there is a $ \beta<\omega_2 $ with $ G\cap M'_\alpha =G_\beta $. Let $ \beta:= \min\{\gamma<\omega_2: 
(G\setminus G_\gamma)\cap M'_\alpha = \varnothing \}$. Then $ G\cap M'_\alpha \subseteq G_\beta $ is immediate from the 
definition. Let $ \gamma <\beta $ and take an $ e \in  (G\setminus G_\gamma)\cap M'_\alpha $. The smallest ordinal $ 
\delta $ with $ e \in (G_{\delta+1}\setminus G_\delta) $ is at least $ \gamma $ and is definable from $ \left\langle G_\beta:\ 
\beta<\omega_2  \right\rangle  $ and $ e $, thus $ \delta\in M'_\alpha $. But then $ \delta+1 \in M'_\alpha $ and therefore $ 
G_{\delta+1}\in  M'_\alpha  $. It follows by Fact \ref{fact: submodels subset} that $ G_{\delta+1}\subseteq  M'_\alpha  $ 
and hence $ G_{\gamma+1}\subseteq  M'_\alpha  $. Since $ \gamma<\beta $ was arbitrary, this means $ G_\beta \subseteq 
M'_\alpha $ which concludes the proof of $ G\cap M'_\alpha =G_\beta $.

It follows from the $ G $-subtractablity of $ M'_\alpha $ that $ G\setminus M'_\alpha \in \Phi $.  Since $ 
M'_{\alpha+1} $ is intersectable and $ G\setminus M'_\alpha \in M'_{\alpha+1} $, it is in 
particular $ 
G\setminus M'_\alpha $-intersectable. Since $ M'_0 $ is also intersectable and hence $ G $-intersectable, we can apply 
Observation \ref{obs: chain very basic} to 
conclude that $ M':=\bigcup_{\alpha<\omega_2}M'_{\alpha} $ is $ G $-intersectable. But then $ G= G\cap M' \in 
\Psi$.
\end{proof}

\begin{question}
Is it possible to prove $ \Phi=\Psi $ in ZFC in the context of Theorem \ref{thm: main}?
\end{question}

\printbibliography
\end{document}